\newtheorem{theorem}{Theorem}[section]
\newtheorem{lemma}[theorem]{Lemma}
\newtheorem{corollary}[theorem]{Corollary}
\newtheorem{example}{Example}[section]
\newtheorem{remark}[theorem]{Remark}
\definecolor{Gray}{gray}{0.9}
\begin{document}
\setcounter{page}{1}

\begin{center}
{\LARGE \bf  On General Sombor index}
\vspace{8mm}

{\bf Phanjoubam Chinglensana$^1$, Sainkupar Mn Mawiong$^2$ }
\vspace{3mm}

$^1$Department of Mathematics, North-Eastern Hill University,\\  
NEHU Campus, Shillong-793022, INDIA\\
e-mail: \url{phanjoubam17@gmail.com}
\vspace{2mm}

$^2$ Department of Basic Sciences and Social Sciences, North-Eastern Hill University,\\
  NEHU Campus, Shillong-793022, INDIA\\
e-mail: \url{skupar@gmail.com}
\vspace{2mm}

\monthdayyeardate\today

\end{center}
\vspace{10mm}

{\bf Abstract:} We present the bounds in terms of other important graph parameters for general Sombor index which generalises both the forgotten index and the Sombor index. We also explore the Nordhaus-Gaddum-type result for the general Sombor index. We present further the relations between general Sombor index and other generalised indices: general Randi\'c index and general sum-connectivity index.\\ \\
{\bf Keywords:} General Sombor index, General Randi\'c index, General sum-connectivity index\\
{\bf 2010 Mathematics Subject Classification:} 05C07, 05C90.
\vspace{5mm}

\section{Introduction}
We consider only finite simple graph in this paper. Let $G$ be a finite simple graph on $n$ vertices and $m$ edges. We denote the vertex set and the edge set of $G$ by $V(G)$ and $E(G)$, respectively. The degree of a vertex $u\in V(G)$ is denoted by $d_G(u)$ and it is defined as the number of edges incident with $u$. The complement of $G$, denoted by $\overline{G}$, is a simple graph on $V(G)$ in which two vertices $u$ and $v$ are adjacent, i.e., joined by an edge $uv$, if and only if they are not adjacent in $G$. Hence, $uv\in E(\overline{G})$ if and only if $uv\notin E(G)$. Clearly, $E (G)\cup E(\overline{G}) = E(K_n )$, where $K_n$ is the complete graph on $n$ vertices and $\overline{m} =|E(\overline{G})| = {n\choose2}- m.$ Let $\Delta$ and $\delta$ denote the maximum vertex degree and the minimum vertex degree of the graph $G$, respectively.

In chemical graph theory, one generally considers various graph-theoretical invariants of molecular graphs (also known as topological indices or molecular descriptors), and study how strongly are they correlated with various properties of the corresponding molecules. The first such topological index was introduced in 1947 by H. Wiener \cite{18} and is used for correlation with boiling points of alkanes. Wiener's index is related to the distances in molecular graphs. Historically, the first vertex-degree-based topological indices were the graph invariants that nowadays are called Zagreb indices. Numerous graph invariants have been (and still continues to be) employed with varying degrees of success in QSAR (quantitative structure-activity relationship) and QSPR (quantitative structure-property relationship) studies. 
The Zagreb indices are amongst the most studied invariants \cite{14} and they are defined as sums of contributions dependent on the degrees of adjacent vertices over all edges of a graph. The Zagreb indices of a graph $G$, i.e., the {\it{first Zagreb index}} $M_1(G)$ and the {\it{second Zagreb index}} $M_2(G)$, were originally defined \cite{11} as follows.\\
$$M_1(G)=\sum_{u\in V(G)} {d_G(u)}^2; \ \ M_2(G)=\sum_{uv\in E(G)} d_G(u)d_G(v).$$
The first Zagreb index of $G$ can also be expressed as $$M_1(G)=\sum_{uv\in E(G)} [d_G(u)+d_G(v)].$$ Generalised version of the first Zagreb index have also been introduced \cite{13}, known as the {\it{general first Zagreb index}} and is defined as 
$$M_1^p (G)=\sum_{u\in V(G)} {d_G(u)}^p.$$ When $p=3$, $M_1^3 (G) =\displaystyle\sum_{u\in V(G)} {d_G(u)}^3$ is known as the {\it{forgotten index}}, denoted by $F(G)$, and is also equal to 
$$F(G)=\sum_{uv\in E(G)}[ {d_G(u)}^2+ {d_G(v)}^2].$$
One of the highly successful and widely used indices in QSPR and QSAR is the {\it{Randi\'c index}} $R(G)$ \cite{16}. It is defined as $$R(G)=\sum_{uv\in E(G)}\dfrac{1}{\sqrt{d_G(u)d_G(v)}}.$$ 
 The Randi\'c index have been extended to {\it general Randi\'c index} \cite{11}, defined as
 $$R_\alpha(G)=\sum_{uv\in E(G)}[d_G(u)d_G(v)]^\alpha$$ for any real number $\alpha$.
Motivated by Randi\'c and Zagreb indices, Zhou and Trinajsti\'c defined {\it sum-connectivity index} $\chi(G)$ \cite{19} and {\it general sum-connectivity index} $\chi_\alpha(G)$ \cite{20}, which are defined as
$$\chi(G)=\sum_{uv\in E(G)}\dfrac{1}{\sqrt{d_G(u)+d_G(v)}}$$ and
$$\chi_\alpha(G)=\sum_{uv\in E(G)}[d_G(u)+d_G(v)]^\alpha$$ for any real number $\alpha$. Gutman recently introduced \cite9 a new vertex-degree-based topological index called the {\it{Sombor index}}, which is defined as $SO(G)=\displaystyle\sum_{uv\in E(G)} \sqrt{{d_G(u)}^2+{d_G(v)}^2}$. Numerous work have so far been carried out on the Sombor index. The chemical applicability of the Sombor index is found to have shown good predictive potential \cite{15}. Computations of Sombor index of various graphs have been carried out, for example chemical graphs \cite5. Basic properties of the Sombor index have been presented and its relations with other topological indices: the Zagreb indices, are investigated in \cite6. In \cite3 and \cite{17}, in addition to Zagreb indices, relations between Sombor index and other topological indices are carried out.

Motivated by the extensions of Randi\'c and sum-connectivity indices and several works on Sombor index, we define {\it general Sombor index} $SO_\alpha(G)$. It is defined as $$SO_\alpha(G)=\displaystyle\sum_{uv\in E(G)} [{d_G(u)}^2+{d_G(v)}^2]^{\alpha/2}$$ for any real number $\alpha$.
For $\alpha=1$, we have the usual Sombor index whereas for $\alpha=2$, we get the forgotten index. Thus $SO_\alpha(G)$ generalises the Sombor index and the forgotten index. In this paper, we present the bounds of general Sombor index in terms of other important graph parameters.  We also explore the Nordhaus-Gaddum-type result for the general Sombor index. We present further the relations between general Sombor index and other generalised indices: general Randi\'c index and general sum-connectivity index.
\section{Preliminaries}
A graph $G$ is called {\it regular} if all vertices of $G$ have the same vertex degree and it is called {\it bi-degreed} if it has two distinct vertex degrees. A connected graph $G$ is called a {\it bi-regular graph} or {\it semi-regular bipartite graph} if $G$ is a bipartite graph with two partite sets $A$ and $B$ such that each vertex in $A$ has degree $\Delta$ and each vertex in $B$ has degree $\delta.$

Now, we state a lemma which provides necessary and sufficient conditions for non-regular graphs to be bi-regular \cite2.
\begin{lemma} \label{lemma1} Let $G$ be a connected non-regular graph. Then the following statements are equivalent:
\begin{enumerate}
\item $G$ is bi-regular.
\item $G$ is bi-degreed and $|d_G(u) - d_G(v)| > 0$ is constant for all edges $uv$ of $G$.
\item $d_G(u) + d_G(v) > 0$ is constant for all edges $uv$ of $G$.
\end{enumerate}
\end{lemma}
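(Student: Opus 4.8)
The plan is to prove the three statements equivalent by establishing the cycle of implications $(1)\Rightarrow(2)\Rightarrow(3)\Rightarrow(1)$, since each arrow is short and needs only the definitions together with the standing hypothesis that $G$ is connected and non-regular (so $\delta<\Delta$ and $G$ has at least one edge); recall that \emph{bi-degreed} means $G$ has exactly two distinct vertex degrees, which must then be $\delta$ and $\Delta$.

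For $(1)\Rightarrow(2)$, I would assume $G$ is semi-regular bipartite with partite sets $A$ and $B$, every vertex of $A$ of degree $\Delta$ and every vertex of $B$ of degree $\delta$. Non-regularity forces both $A$ and $B$ nonempty and $\Delta>\delta$, so $G$ is bi-degreed; and since $G$ is bipartite every edge joins $A$ to $B$, whence $|d_G(u)-d_G(v)|=\Delta-\delta>0$ on every edge, a positive constant. For $(2)\Rightarrow(3)$, the only available vertex degrees are $\delta$ and $\Delta$, and $|d_G(u)-d_G(v)|>0$ on each edge forces the two endpoints of each edge to have distinct degrees; hence $\{d_G(u),d_G(v)\}=\{\delta,\Delta\}$ and $d_G(u)+d_G(v)=\delta+\Delta>0$ on every edge.

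For $(3)\Rightarrow(1)$, suppose $d_G(u)+d_G(v)=c$ for every edge $uv$. Fix a vertex $w$ and set $d_0=d_G(w)$. Since a neighbour of a vertex of degree $x$ has degree $c-x$, an induction on the distance from $w$, using connectedness of $G$, shows that every vertex has degree $d_0$ or $c-d_0$. Put $A=\{v\in V(G):d_G(v)=d_0\}$ and $B=V(G)\setminus A$. If $d_0=c-d_0$ then $G$ is regular, contradicting the hypothesis, so $d_0\neq c-d_0$ and $B=\{v\in V(G):d_G(v)=c-d_0\}$ is nonempty. An edge inside $A$ would give $c=2d_0$ and an edge inside $B$ would give $c=2(c-d_0)$, both impossible; hence every edge joins $A$ to $B$, so $G$ is bipartite, and relabelling $\max\{d_0,c-d_0\}$ as $\Delta$ and $\min\{d_0,c-d_0\}$ as $\delta$ exhibits $G$ as semi-regular bipartite.

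I expect the only implication with real content to be $(3)\Rightarrow(1)$, and the single delicate point to be the propagation step: connectedness is exactly what lets one conclude that $d_0$ and $c-d_0$ exhaust all vertex degrees, after which ruling out monochromatic edges yields bipartiteness immediately. The other two implications follow directly from the definitions of bi-degreed and semi-regular bipartite together with $\delta<\Delta$.
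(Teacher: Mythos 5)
Your proof is correct. Note that the paper does not actually prove Lemma~\ref{lemma1} itself --- it quotes it from the literature --- and the only argument it supplies is for the closely analogous Remark~\ref{remark1}, where condition (3) is replaced by constancy of $d_G(u)^2+d_G(v)^2$. Measured against that proof, your handling of the one substantive implication $(3)\Rightarrow(1)$ takes a genuinely different route. The paper establishes bipartiteness first, by contradiction: an odd cycle would contain a path of even length between two adjacent vertices, forcing them to have equal degree, and propagating that degree along paths to every vertex would make $G$ regular; bi-regularity is then read off from the fact that vertices in the same partite set are joined by even paths. You instead fix a base vertex $w$ of degree $d_0$, use the alternation $x\mapsto c-x$ along paths to show every degree is $d_0$ or $c-d_0$, exclude $d_0=c-d_0$ by non-regularity, and kill monochromatic edges in either degree class because they would force $c=2d_0$ or $c=2(c-d_0)$; bipartiteness and the degree structure then fall out simultaneously. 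Both arguments rest on the same alternation observation, but yours is more direct in that it never needs the odd-cycle detour, while the paper's version isolates bipartiteness as a separate step; either adapts immediately to the squared condition of Remark~\ref{remark1}, since $d_G(u)^2+d_G(v)^2=c$ together with $d_G(u)$ still determines $d_G(v)$. Your $(1)\Rightarrow(2)$ and $(2)\Rightarrow(3)$ are routine and correct.
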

\begin{remark} \label{remark1} We note that the third condition in Lemma \ref{lemma1} can be replaced by ${d_G(u)}^2 + {d_G(v)}^2 > 0$ is constant for all edges $uv$ of $G$. Thus the following statements are equivalent for a connected non-regular graph $G$:
\begin{enumerate}
\item $G$ is bi-regular.
\item $G$ is bi-degreed and $|d_G(u)- d_G(v)| > 0$ is constant for all edges $uv$ of $G$.
\item ${d_G(u)}^2 + {d_G(v)}^2> 0$ is constant for all edges $uv$ of $G$. 
\end{enumerate}
\end{remark}
\begin{proof} [Proof of the Remark \ref{remark1}] Notice that (2) implies (3) is clear from the definition. So, we only prove that (3) implies (1). Now by (3) we have that ${d_G(u)}^2 + {d_G(v)}^2 > 0$ is constant for all edges $uv$ of $G$. This implies that any two vertices joined by a path of even length in $G$ must have the same degree. We now show that $G$ is bipartite by contradiction. Suppose that $G$ is not bipartite. Then $G$ has an odd cycle $C$. Let $u,v$ be two adjacent vertices on $C$. Then $C$ contains a path of even length connecting $u$ and $v$. Hence, $d_G(u) = d_G(v) = k$ for some positive integer $k$. Since $G$ is connected, for any vertex $x$ of $G$, there is a path $P$ between $u$ and $x$. Let $e=ab$ be any edge on the path $P$. Then by (3), $d_G(a)^2+d_G(b)^2=2k^2$ since ${d_G(u)}^2 + {d_G(v)}^2 =2k^2$ for the edge $uv$ of $G$. Also $d_G(u) = k.$ Then each vertex on the path $P$ has vertex degree $k$ and so does $x$. It follows that $G$ is regular. This contradicts the assumption that $G$ is not regular. Hence, $G$ is bipartite. Furthermore, any two vertices $u, v$ in the same partite set of $G$ are joined by a path of even length, and so $d_G(u) = d_G(v).$ Thus $G$ is bi-regular. This shows that (3) implies (1).
\end{proof}
Next, we recall the famous Jensen's inequality (see \cite4). 
\begin{lemma}[Jensen's inequality] Let $f:(a,b)\to \mathbb{R}$ be a convex function. Let $n\in \mathbb{N}$ and $\alpha_1,\alpha_2,\dots, \alpha_n\in (0,1)$ be real numbers such that $\alpha_1+ \alpha_2+\dots+\alpha_n=1$. Then for any $x_1,x_2,\dots,x_n\in (a,b)$ we have $$f\left(\sum_{i=1}^n \alpha_i x_i\right) \leq \sum_{i=1}^n \alpha_i f(x_i).$$
\end{lemma}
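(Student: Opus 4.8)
The plan is to argue by induction on $n$, using the definition of convexity as the engine and a standard ``peel off the last weight'' reduction for the inductive step. Because the hypothesis demands each $\alpha_i\in(0,1)$, the genuine base case is $n=2$: here $\alpha_1+\alpha_2=1$ forces the claimed inequality $f(\alpha_1x_1+\alpha_2x_2)\le\alpha_1 f(x_1)+\alpha_2 f(x_2)$ to be nothing other than the defining property of a convex function on $(a,b)$. (One may, if desired, record $n=1$, with $\alpha_1=1$, as a degenerate equality lying just outside the stated constraints.)

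For the inductive step I would assume the statement for $n-1$ and take $\alpha_1,\dots,\alpha_n\in(0,1)$ with $\sum_{i=1}^n\alpha_i=1$ and $x_1,\dots,x_n\in(a,b)$. Set $\beta_i=\alpha_i/(1-\alpha_n)$ for $i=1,\dots,n-1$; this is legitimate since $\alpha_n\in(0,1)$ gives $1-\alpha_n>0$, and one checks immediately that $\beta_i\in(0,1)$ and $\sum_{i=1}^{n-1}\beta_i=1$. Put $y=\sum_{i=1}^{n-1}\beta_i x_i$. The key observation is that $y\in(a,b)$, because it is a convex combination of points of the interval $(a,b)$; likewise $\alpha_n x_n+(1-\alpha_n)y\in(a,b)$, and moreover $\sum_{i=1}^n\alpha_i x_i=\alpha_n x_n+(1-\alpha_n)y$.

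Then I would apply the $n=2$ case (i.e.\ convexity) to the pair $x_n,y$ with weights $\alpha_n,1-\alpha_n$ to obtain $f\bigl(\sum_{i=1}^n\alpha_i x_i\bigr)\le\alpha_n f(x_n)+(1-\alpha_n)f(y)$, and next apply the induction hypothesis to $f(y)=f\bigl(\sum_{i=1}^{n-1}\beta_i x_i\bigr)\le\sum_{i=1}^{n-1}\beta_i f(x_i)$. Substituting the second bound into the first and using $(1-\alpha_n)\beta_i=\alpha_i$ collapses the right-hand side to $\sum_{i=1}^n\alpha_i f(x_i)$, completing the induction.

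The only point demanding real care --- and the place the argument could break if one is sloppy --- is the domain bookkeeping: one must check that every convex combination formed along the way, namely $y$ and then $\alpha_n x_n+(1-\alpha_n)y$, really lies in $(a,b)$ so that $f$ may be evaluated there, and that the rescaled weights $\beta_i$ remain in $(0,1)$ with sum $1$ so the induction hypothesis applies verbatim. Once the interval $(a,b)$ is recognised as convex, all of this is automatic, but it is precisely the hinge on which the validity of the reduction turns.
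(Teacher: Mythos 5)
Your induction argument is correct and complete: the base case $n=2$ is exactly the definition of convexity, the rescaled weights $\beta_i=\alpha_i/(1-\alpha_n)$ are well defined and sum to $1$, and the domain bookkeeping you flag (that $y$ and $\alpha_n x_n+(1-\alpha_n)y$ stay in the convex set $(a,b)$) is handled properly. Note that the paper itself offers no proof of this lemma --- it is recalled as a classical fact with a citation to Cvetkovski's \emph{Inequalities} --- so there is nothing to compare against; your write-up is simply the standard textbook induction, and it would serve as a valid self-contained justification if one were wanted.
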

\begin{corollary} \label{corollary1} For a positive integer $k$, if $f$ is strictly convex (i.e., $f''>0$), then $$f\left(\sum_{i=1}^k \dfrac{ x_i}{k}\right) \leq \dfrac{1}{k} \sum_{i=1}^k f(x_i)$$ where the equality holds if and only if $x_1=x_2=\dots=x_k.$ Moreover, the inequality is reversed if $-f$ is strictly convex.
\end{corollary}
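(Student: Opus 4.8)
The plan is to read off the inequality as the equal-weights instance of Jensen's inequality and then to settle the equality case separately, since the form of Jensen's inequality recorded above carries no equality statement. For $k=1$ the assertion is trivial, so assume $k \ge 2$ and put $\alpha_1 = \alpha_2 = \cdots = \alpha_k = \tfrac{1}{k}$. These numbers lie in $(0,1)$ and sum to $1$, so Jensen's inequality applied to the strictly convex $f$ yields $f\!\left(\sum_{i=1}^k \tfrac{x_i}{k}\right) \le \tfrac{1}{k}\sum_{i=1}^k f(x_i)$ immediately.

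For the equality characterisation I would avoid an induction on $k$ and instead use the tangent-line (supporting-line) bound, which is where the hypothesis $f'' > 0$ does the real work. Write $\bar x = \tfrac{1}{k}\sum_{i=1}^k x_i$, which lies in $(a,b)$ because $(a,b)$ is an interval and $\bar x$ is a convex combination of points of $(a,b)$. By Taylor's theorem with Lagrange remainder there is, for each $i$, a point $\xi_i$ strictly between $x_i$ and $\bar x$ with $f(x_i) = f(\bar x) + f'(\bar x)(x_i - \bar x) + \tfrac{1}{2} f''(\xi_i)(x_i - \bar x)^2$. Since $f'' > 0$ the remainder term is nonnegative, and it is strictly positive exactly when $x_i \neq \bar x$. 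Summing over $i$ and using $\sum_{i=1}^k (x_i - \bar x) = 0$ gives $\sum_{i=1}^k f(x_i) \ge k\, f(\bar x)$, that is, the desired inequality, with equality if and only if $x_i = \bar x$ for every $i$, i.e. if and only if $x_1 = x_2 = \cdots = x_k$.

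Finally, if $-f$ is strictly convex I would simply invoke the statement already proved for the function $g := -f$: this gives $g(\bar x) \le \tfrac{1}{k}\sum_{i=1}^k g(x_i)$, and multiplying through by $-1$ reverses the inequality for $f$, with the same equality condition. The only genuinely delicate point in the whole argument is the equality analysis: the version of Jensen's inequality quoted is not strict, so one must either strengthen it or, as above, run the Taylor-remainder computation directly; I expect that to be the main (though modest) obstacle.
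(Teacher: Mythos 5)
Your proof is correct. The paper itself offers no proof of this corollary --- it is presented as an immediate specialisation of the preceding Jensen lemma to the equal weights $\alpha_1=\dots=\alpha_k=\tfrac1k$, which is exactly your first step. Where you genuinely go beyond the paper is the equality analysis: as you rightly observe, the version of Jensen's inequality quoted in the paper carries no equality clause, so the ``if and only if $x_1=\dots=x_k$'' part of the corollary does not actually follow from the stated lemma, and your Taylor-with-Lagrange-remainder expansion about $\bar x$ (using $f''>0$ and $\sum_i(x_i-\bar x)=0$) is a clean, self-contained way to supply it; it in fact reproves the whole inequality, making the initial appeal to Jensen redundant but harmless. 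The only microscopic point worth noting is that when $x_i=\bar x$ there is no point strictly between $x_i$ and $\bar x$, but the remainder term vanishes in that case so the identity holds trivially. The reduction of the concave case to $g=-f$ is also fine. In short: same route as the paper for the inequality, plus a necessary and correct patch for the equality statement that the paper leaves unjustified.
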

The following inequality is due to Radon and can be found in  \cite6.
\begin{lemma}[Radon's inequality] \label{lemma2}
If $a_k, b_k>0$ for $k=1,2,\dots, m$ and $p>0$, then $$\sum_{k=1}^m \dfrac{{a_k}^{p+1}}{{b_k}^p}\geq \dfrac{\left(\displaystyle\sum_{k=1}^m a_k\right)^{p+1}}{\left(\displaystyle\sum_{k=1}^m b_k\right)^p}.$$ Equality holds if $\dfrac{a_1}{b_1}=\dfrac{a_2}{b_2}=\dots=\dfrac{a_m}{b_m}$.
\end{lemma}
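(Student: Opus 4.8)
The plan is to obtain Radon's inequality as an immediate application of Jensen's inequality to the function $f(x)=x^{p+1}$ on $(0,\infty)$. Since $p>0$ we have $f''(x)=p(p+1)x^{p-1}>0$ for $x>0$, so $f$ is strictly convex there. The case $m=1$ is a trivial equality, so I would assume $m\ge 2$ throughout.

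First I would rewrite the left-hand side so that $f$ acts on the ratios $a_k/b_k$. Put $B=\sum_{j=1}^m b_j>0$, $\lambda_k=b_k/B$, and $x_k=a_k/b_k$. Because every $b_k>0$ and $m\ge 2$, each $\lambda_k$ lies in $(0,1)$ with $\sum_{k=1}^m\lambda_k=1$, while $x_k\in(0,\infty)$. A short computation then gives
$$\sum_{k=1}^m\frac{a_k^{p+1}}{b_k^{p}}=\sum_{k=1}^m b_k\left(\frac{a_k}{b_k}\right)^{p+1}=B\sum_{k=1}^m\lambda_k f(x_k),\qquad \sum_{k=1}^m\lambda_k x_k=\frac{1}{B}\sum_{k=1}^m a_k.$$

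Next I would apply Jensen's inequality to $f$ with weights $\lambda_k$ and points $x_k$, obtaining $\sum_k\lambda_k f(x_k)\ge f\!\left(\sum_k\lambda_k x_k\right)$. Substituting the two identities above yields
$$\sum_{k=1}^m\frac{a_k^{p+1}}{b_k^{p}}\ \ge\ B\left(\frac{1}{B}\sum_{k=1}^m a_k\right)^{p+1}=\frac{\left(\sum_{k=1}^m a_k\right)^{p+1}}{\left(\sum_{k=1}^m b_k\right)^{p}},$$
which is exactly the claimed bound. For the equality statement it suffices to note that if $a_k=t\,b_k$ for all $k$ with a fixed $t>0$, then both sides equal $t^{p+1}B$; conversely, strict convexity of $f$ forces all $x_k$ equal when Jensen is an equality, recovering $a_1/b_1=\dots=a_m/b_m$.

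I do not expect a genuine obstacle: the argument is a one-line consequence of Jensen once the left-hand side is put in the homogenised form $B\sum_k\lambda_k f(x_k)$, and the only points needing a moment's care are verifying $\lambda_k\in(0,1)$ (so that Jensen's inequality applies verbatim) and disposing of the degenerate case $m=1$. If one preferred to avoid Jensen, the same inequality falls out of Hölder's inequality with conjugate exponents $p+1$ and $(p+1)/p$ applied to the factorisation $a_k=\bigl(a_k/b_k^{p/(p+1)}\bigr)\cdot b_k^{p/(p+1)}$ and then raising both sides to the power $p+1$, or by induction on $m$ from the two-term case.
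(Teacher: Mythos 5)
Your proof is correct. Note, however, that the paper itself does not prove Lemma~\ref{lemma2}: Radon's inequality is simply quoted from the literature (``can be found in [6]''), so there is no in-paper argument to compare yours against. What you supply is the standard derivation from the weighted form of Jensen's inequality, which is exactly the tool the paper records immediately before stating Radon's inequality; writing $\sum_k a_k^{p+1}/b_k^p = B\sum_k \lambda_k f(a_k/b_k)$ with $\lambda_k=b_k/B$ and $f(x)=x^{p+1}$ strictly convex for $p>0$ gives the bound in one line, and your checks are all sound (the weights lie in $(0,1)$ once $m\ge 2$, the $m=1$ case is trivial, and the sufficiency of $a_1/b_1=\dots=a_m/b_m$ for equality is verified directly, which is all the lemma claims). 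In effect your proposal upgrades a citation to a self-contained proof using only machinery the paper already has, and the Hölder alternative you sketch is an equally valid independent route; either would be an appropriate substitute for the external reference.
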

\section{General Sombor index and its properties}
In this section, we present the bounds of general Sombor index in terms of other important graph parameters.  We also explore the Nordhaus-Gaddum-type result for the general Sombor index. First, we give some examples of general Sombor index.
\begin{example}
\begin{enumerate}
\item $SO_\alpha(K_n)= 2^{-1+\alpha/2}n(n-1)^{\alpha+1}=2^{\alpha/2}m(n-1)^\alpha$ and $SO_\alpha(\overline{K_n})=0.$
\item $SO_\alpha(C_n)=2^{(3\alpha)/2}n.$
\item $SO_\alpha(P_2)=SO_\alpha(K_2)=2^{\alpha/2}.$
\item $SO_\alpha(P_n)=2\times5^{\alpha/2}+2(n-3)2^{\alpha/2}$ for $n\geq3$.
\end{enumerate}
\end{example}
\subsection{Bounds for the general Sombor index}
We first present bounds for the general Sombor index in terms of the forgotten index and numbers of edges of the graph.
\begin{theorem} Let $G$ be a graph with $m\geq1$ edges. Then we have the following.
\begin{enumerate} 
\item $SO_\alpha(G)\geq m^{1-\alpha/2} F(G)^{\alpha/2}$ \  if \ \  $\alpha<0$ or \ \ $\alpha>1$ and
\item $SO_\alpha(G)\leq m^{1-\alpha/2} F(G)^{\alpha/2}$ \  if \ \  $0<\alpha<1$.
\end{enumerate}
Moreover, equality holds in either cases if and only if $G$ is bi-regular.
\end{theorem}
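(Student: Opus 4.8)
The plan is to apply the power-mean (Jensen) inequality from Corollary \ref{corollary1} to the $m$ summands of the general Sombor index. Set $k=m$ and, for each edge $uv\in E(G)$, let $x_{uv}=d_G(u)^2+d_G(v)^2>0$. Then by definition $\sum_{uv\in E(G)} x_{uv}=F(G)$ and $SO_\alpha(G)=\sum_{uv\in E(G)} x_{uv}^{\alpha/2}$. The natural test function is $f(t)=t^{\alpha/2}$ on $(0,\infty)$, whose second derivative is $f''(t)=\tfrac{\alpha}{2}\bigl(\tfrac{\alpha}{2}-1\bigr)t^{\alpha/2-2}$. This is strictly positive exactly when $\tfrac{\alpha}{2}<0$ or $\tfrac{\alpha}{2}>1$, i.e. when $\alpha<0$ or $\alpha>2$; and $-f$ is strictly convex when $0<\tfrac{\alpha}{2}<1$, i.e. $0<\alpha<2$. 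So the raw Jensen step gives the two stated directions but with the thresholds $\alpha>2$ and $0<\alpha<2$ rather than $\alpha>1$ and $0<\alpha<1$.

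To get the sharper ranges claimed in the theorem I would instead take $f(t)=t^{\alpha}$ and apply it to $y_{uv}=\sqrt{x_{uv}}=\sqrt{d_G(u)^2+d_G(v)^2}$, so that $SO_\alpha(G)=\sum f(y_{uv})$ while $\sum y_{uv}^2$ is not quite $F(G)$ — this mismatch means the cleanest route is actually the version with the square in the exponent. Concretely, apply Corollary \ref{corollary1} with $f(t)=t^{\alpha/2}$ to the variables $x_{uv}$ and then rearrange:
\begin{equation*}
f\!\left(\frac{1}{m}\sum_{uv\in E(G)} x_{uv}\right)=\left(\frac{F(G)}{m}\right)^{\alpha/2}\ \lessgtr\ \frac{1}{m}\sum_{uv\in E(G)} x_{uv}^{\alpha/2}=\frac{SO_\alpha(G)}{m},
\end{equation*}
with ``$\le$'' when $f$ is strictly convex and ``$\ge$'' when $-f$ is. Multiplying through by $m$ yields $SO_\alpha(G)\gtrless m\,(F(G)/m)^{\alpha/2}=m^{1-\alpha/2}F(G)^{\alpha/2}$. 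For the stated ranges $\alpha<0$ and $\alpha>1$ (resp. $0<\alpha<1$) one checks $\alpha/2<0$ or $\alpha/2>1/2$, which does not literally match strict convexity of $t\mapsto t^{\alpha/2}$; I expect the authors in fact intend $f(t)=t^\alpha$ applied to $x_{uv}=d_G(u)^2+d_G(v)^2$ with $F$ replaced by a $\sum x_{uv}$-type quantity, and I would follow whichever normalization makes the exponent bookkeeping consistent. The main technical obstacle is precisely this: pinning down which convex function and which ``weights'' $x_{uv}$ reproduce the exponent $\alpha/2$ on $F(G)$ together with the threshold $\alpha=1$, and then being careful about the direction of the inequality on each of the three parameter intervals $\alpha<0$, $0<\alpha<1$, $\alpha>1$.

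Finally, for the equality characterization I would invoke the equality clause of Corollary \ref{corollary1}: equality holds if and only if all the $x_{uv}$ are equal, i.e. $d_G(u)^2+d_G(v)^2$ is the same constant for every edge $uv$ of $G$. If $G$ is regular this is automatic (but then $G$ is a special case one can fold in separately, noting both sides coincide), and if $G$ is non-regular then Remark \ref{remark1} — specifically the equivalence of conditions (1) and (3) there — tells us this constancy is equivalent to $G$ being bi-regular. Hence equality in both cases holds exactly when $G$ is bi-regular (interpreting the regular case appropriately), which completes the argument. I would close by remarking that the $\alpha=1$ and $\alpha=2$ endpoints recover known Sombor and forgotten-index inequalities as a sanity check.
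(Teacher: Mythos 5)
Your route is exactly the paper's: apply Corollary \ref{corollary1} with $f(t)=t^{\alpha/2}$ to the $m$ quantities $x_{uv}=d_G(u)^2+d_G(v)^2$, so that $\sum x_{uv}=F(G)$ and $\sum f(x_{uv})=SO_\alpha(G)$, and then read off the equality case from Remark \ref{remark1}. Your convexity bookkeeping is also correct and, in fact, more careful than the paper's: $f''(t)=\tfrac{\alpha}{2}\bigl(\tfrac{\alpha}{2}-1\bigr)t^{\alpha/2-2}$ is strictly positive precisely for $\alpha<0$ or $\alpha>2$, whereas the paper simply asserts that $t^{\alpha/2}$ is strictly convex for ``$\alpha<0$ or $\alpha>1$''. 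That assertion is false on $1<\alpha<2$, and the theorem itself fails there: for $G=P_4$ one has $m=3$, $F(G)=18$, and with $\alpha=3/2$ the left side is $2\cdot5^{3/4}+8^{3/4}\approx11.444$ while $m^{1-\alpha/2}F(G)^{\alpha/2}=3\cdot6^{3/4}\approx11.501$, so the claimed inequality $SO_\alpha(G)\ge m^{1-\alpha/2}F(G)^{\alpha/2}$ is violated. So the obstruction you ran into is not a flaw in your normalization; it is an error in the statement, and the correct thresholds are $\alpha<0$ or $\alpha>2$ for the lower bound and $0<\alpha<2$ for the upper bound.

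The genuine gap in your writeup is that you never commit to a conclusion. Having correctly computed where $f$ is convex, you speculate about switching to $f(t)=t^\alpha$ or some other normalization ``that makes the exponent bookkeeping consistent'' — but no such choice exists, because the inequality is simply false on $1<\alpha<2$. A complete answer either proves the theorem with the corrected ranges (which your Jensen step already does verbatim) or exhibits a counterexample to the stated range; leaving the resolution open means the proof is not finished. One further small point on the equality case: constancy of $d_G(u)^2+d_G(v)^2$ over all edges also holds for every regular graph (e.g. $K_4$), and a non-bipartite regular graph is not bi-regular under the paper's definition, so the ``if and only if $G$ is bi-regular'' clause needs the regular graphs folded in explicitly — you noticed this, and it is a real (if minor) imprecision shared with the paper.
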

\begin{proof}Notice that if $\alpha<0$ or $\alpha>1$, then $x^{\alpha/2} $ for $x>0$ is strictly convex. Thus by Corollary \ref{corollary1}, we have
\begin{align*}
\left[\dfrac{1}{m} F(G)\right]^{\alpha/2}=&\left[\dfrac{1}{m} \displaystyle\sum_{uv\in E(G)} ({d_G(u)}^2+{d_G(v)}^2)\right]^{\alpha/2}\\
=&\left[ \displaystyle\sum_{uv\in E(G)} \dfrac{{d_G(u)}^2+{d_G(v)}^2}{m}\right]^{\alpha/2}\\
\leq& \dfrac{1}{m} \displaystyle\sum_{uv\in E(G)} [{d_G(u)}^2+{d_G(v)}^2]^{\alpha/2}=\dfrac{1}{m} SO_\alpha(G)
\end{align*}
Thus $SO_\alpha(G)\geq m^{1-\alpha/2} F(G)^{\alpha/2}$ and the equality holds if and only if ${d_G(u)}^2 + {d_G(v)}^2$ is a constant for all edges $uv$ of $G$. That is, by Remark \ref{remark1} the equality holds if and only if $G$ is bi-regular. 
Similarly,  if $0<\alpha<1$, then $-x^{\alpha/2} $ for $x>0$ is strictly convex. Hence $SO_\alpha(G)\leq m^{1-\alpha/2} F(G)^{\alpha/2}$, where the equality holds if and only if $G$ is bi-regular.
\end{proof}
It is reported in \cite8 that $F(G)\geq \dfrac{M_1(G)^2}{2m}$ for a graph with $m$ edges, where the equality holds if and only if $G$ is a regular graph. Also, $M_1(G)\geq \dfrac{4m^2}{n} $, where the equality holds if and only if $G$ is a regular graph (due to the Cauchy-Schwarz inequality) \cite1. Thus we have the following bounds for the general Sombor index in terms of $n$ and $m$.
\begin{corollary} Let $G$ be a graph on $n$ vertices and $m\geq1$ edges. Then
\begin{enumerate} 
\item $SO_\alpha(G)\geq 8^{\alpha/2} m^{1+\alpha} n^{-\alpha}$ \  if \ \  $\alpha<0$ or $\alpha>1$ and
\item $SO_\alpha(G)\leq 8^{\alpha/2} m^{1+\alpha}n^{-\alpha}$ \  if \ \  $0<\alpha<1$.
\end{enumerate}
The equality holds in either cases if and only if $G$ is regular.
\end{corollary}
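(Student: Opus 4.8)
The plan is simply to substitute the two degree-sequence inequalities quoted immediately above into the bounds furnished by the preceding theorem. First I would chain those two facts: from $M_1(G)\geq 4m^2/n$ (Cauchy--Schwarz, equality iff $G$ is regular) and $F(G)\geq M_1(G)^2/(2m)$ (equality iff $G$ is regular), one obtains
$$F(G)\ \geq\ \frac{M_1(G)^2}{2m}\ \geq\ \frac{1}{2m}\Bigl(\frac{4m^2}{n}\Bigr)^{2}\ =\ \frac{8m^3}{n^2},$$
and both intermediate steps are equalities precisely when $G$ is regular, so $F(G)=8m^3/n^2$ holds iff $G$ is regular.

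Next I would invoke the theorem, which gives $SO_\alpha(G)\geq m^{1-\alpha/2}F(G)^{\alpha/2}$ for $\alpha<0$ or $\alpha>1$ (and the reverse inequality for $0<\alpha<1$), with equality iff $d_G(u)^2+d_G(v)^2$ is constant over all edges of $G$. Raising $F(G)\geq 8m^3/n^2$ to the power $\alpha/2$ and feeding it in, the exponents collapse:
$$m^{1-\alpha/2}\Bigl(\frac{8m^3}{n^2}\Bigr)^{\alpha/2}=8^{\alpha/2}\,m^{\,1-\alpha/2+3\alpha/2}\,n^{-\alpha}=8^{\alpha/2}\,m^{1+\alpha}\,n^{-\alpha},$$
which is exactly the asserted bound, with the direction ($\geq$ or $\leq$) carried over from the theorem. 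For the equality discussion, the final bound is attained iff the theorem's inequality is attained \emph{and} $F(G)=8m^3/n^2$; since a regular graph makes $d_G(u)^2+d_G(v)^2$ trivially constant over the edges, the first condition is automatic once the second holds, so the two conditions together collapse to the single requirement that $G$ be regular, and a one-line check with $m=nr/2$ confirms that an $r$-regular graph does attain $8^{\alpha/2}m^{1+\alpha}n^{-\alpha}$.

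I do not expect a genuine obstacle here --- this is a corollary --- but the one delicate point is the monotonicity bookkeeping: $t\mapsto t^{\alpha/2}$ is increasing when $\alpha>1$ but decreasing when $\alpha<0$, so when composing the theorem's bound on $SO_\alpha(G)$ with $F(G)\geq 8m^3/n^2$ one must verify case by case that the two inequalities chain in the same direction before concatenating them. This is immediate for $\alpha>1$ (and for $0<\alpha<1$), and the range $\alpha<0$ is the part that must be re-examined with care.
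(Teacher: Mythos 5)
Your route is the one the paper intends (it gives no explicit proof, only the remark ``Thus we have the following bounds'' after quoting $F(G)\geq M_1(G)^2/(2m)$ and $M_1(G)\geq 4m^2/n$), but the chaining step you describe only works in one of the three ranges, and the delicate point you flag at the end is exactly where it breaks --- and it breaks in more places than you concede. Since $F(G)\geq 8m^3/n^2$ is a \emph{lower} bound on $F(G)$, raising it to the power $\alpha/2$ and inserting it into the theorem behaves as follows. For $\alpha>1$: the theorem gives $SO_\alpha(G)\geq m^{1-\alpha/2}F(G)^{\alpha/2}$ and $t\mapsto t^{\alpha/2}$ is increasing, so $m^{1-\alpha/2}F(G)^{\alpha/2}\geq 8^{\alpha/2}m^{1+\alpha}n^{-\alpha}$ and the two inequalities concatenate; this case is fine. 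For $0<\alpha<1$, which you call ``immediate'': the theorem gives $SO_\alpha(G)\leq m^{1-\alpha/2}F(G)^{\alpha/2}$, but $t^{\alpha/2}$ is still increasing, so the substitution yields $m^{1-\alpha/2}F(G)^{\alpha/2}\geq 8^{\alpha/2}m^{1+\alpha}n^{-\alpha}$ --- you have $SO_\alpha\leq A$ and $A\geq B$, from which $SO_\alpha\leq B$ does not follow. For $\alpha<0$: the theorem gives $SO_\alpha(G)\geq m^{1-\alpha/2}F(G)^{\alpha/2}$, but now $t^{\alpha/2}$ is decreasing, so $m^{1-\alpha/2}F(G)^{\alpha/2}\leq 8^{\alpha/2}m^{1+\alpha}n^{-\alpha}$; again the inequalities point the wrong way.

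This is not a repairable bookkeeping issue: the star $K_{1,9}$ (so $n=10$, $m=9$, and $d_u^2+d_v^2=82$ on every edge) gives $SO_{1/2}=9\cdot 82^{1/4}\approx 27.1$, exceeding the claimed upper bound $8^{1/4}\cdot 9^{3/2}\cdot 10^{-1/2}\approx 14.4$, and $SO_{-1}=9/\sqrt{82}\approx 0.99$, falling below the claimed lower bound $10/\sqrt{8}\approx 3.54$. So parts of the corollary as stated are false for $\alpha<0$ and $0<\alpha<1$; the flaw is inherited from the paper, but your write-up reproduces it rather than catching it. Your exponent arithmetic, the verification that $r$-regular graphs attain $8^{\alpha/2}m^{1+\alpha}n^{-\alpha}$, and the collapse of the two equality conditions to regularity are all correct, and they suffice for the $\alpha>1$ case; the other two cases would need an \emph{upper} bound on $F(G)$ in terms of $n$ and $m$ (for $0<\alpha<1$) or a different argument entirely (for $\alpha<0$), neither of which is supplied by the quoted inequalities.
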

Next, we present another bound for the general Sombor index.
\begin{theorem} \label{theorem2} Let $G$ be a graph on $n\geq2$ vertices and $m$ edges. Then we have the following.
\begin{enumerate}
\item If $0<\alpha<1$, then $$SO_\alpha(G)\geq F(G)^{\alpha/2}$$ where the equality holds if and only if $G=K_2\cup \overline{K_{n-2}}$ or $G=\overline{K_n}.$
\item If $\alpha<0$, then $$SO_\alpha(G)\leq 2^{-1+\alpha/2}n(n-1)$$ where the equality holds if and only if $G=K_2$.
\item If $\alpha>1$, then $$SO_\alpha(G)\leq 2^{\alpha/2}m(n-1)^\alpha$$  where the equality holds if and only if $G=K_n$ or $G=\overline{K_n}.$
\end{enumerate}
\end{theorem}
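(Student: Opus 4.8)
The plan is to handle part (1) via an elementary power-sum inequality, and parts (2) and (3) by bounding each edge-contribution of $SO_\alpha(G)$ by its extreme value. Throughout I will write $x_e = d_G(u)^2 + d_G(v)^2$ for an edge $e = uv$, so that $SO_\alpha(G) = \sum_{e \in E(G)} x_e^{\alpha/2}$ and $F(G) = \sum_{e \in E(G)} x_e$, and I will use that $1 \le d_G(u) \le n-1$ for every endpoint of an edge, hence $2 \le x_e \le 2(n-1)^2$.

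For part (1), I would first dispose of the case $m = 0$, where $G = \overline{K_n}$ and both sides vanish (note $\alpha/2 > 0$). For $m \ge 1$ put $S = F(G) = \sum_e x_e > 0$. Since $0 < \alpha/2 < 1$, one has $t^{\alpha/2} \ge t$ for all $t \in [0,1]$; applying this to $t = x_e/S \in (0,1]$ and summing over the edges gives $SO_\alpha(G)/S^{\alpha/2} = \sum_e (x_e/S)^{\alpha/2} \ge \sum_e x_e/S = 1$, i.e. $SO_\alpha(G) \ge F(G)^{\alpha/2}$. For the equality case, equality in the previous step forces $(x_e/S)^{\alpha/2} = x_e/S$, hence $x_e/S \in \{0,1\}$, for every edge; since each $x_e > 0$ this means $x_e = S$ for every $e$, which together with $\sum_e x_e = S$ forces $m = 1$. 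Thus equality holds precisely when $G$ has at most one edge, i.e. $G = \overline{K_n}$ or $G = K_2 \cup \overline{K_{n-2}}$, and I would verify the converse by direct computation ($SO_\alpha = 2^{\alpha/2} = F^{\alpha/2}$ in the one-edge case).

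For part (2), $\alpha < 0$ gives $\alpha/2 < 0$, so from $x_e \ge 2$ we get $x_e^{\alpha/2} \le 2^{\alpha/2}$ for every edge, and summing (together with $m \le \binom{n}{2}$) yields $SO_\alpha(G) \le 2^{\alpha/2} m \le 2^{\alpha/2}\binom{n}{2} = 2^{-1+\alpha/2} n(n-1)$. Equality requires $x_e = 2$ for every edge, so $G$ is a disjoint union of edges, and $m = \binom{n}{2}$; but a disjoint union of edges has at most $\lfloor n/2 \rfloor$ edges and $\binom{n}{2} > n/2$ once $n \ge 3$, so equality can occur only for $n = 2$, giving $G = K_2$ (which indeed attains it). For part (3), $\alpha > 1$ gives $\alpha/2 > 0$, so from $x_e \le 2(n-1)^2$ we get $x_e^{\alpha/2} \le [2(n-1)^2]^{\alpha/2} = 2^{\alpha/2}(n-1)^\alpha$, and summing yields $SO_\alpha(G) \le 2^{\alpha/2} m (n-1)^\alpha$. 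If $m = 0$ both sides are $0$ and $G = \overline{K_n}$; if $m \ge 1$, equality forces every edge to join two vertices of degree $n-1$, so fixing one edge $uv$ gives two vertices adjacent to all others, and then (for $n \ge 3$) any further vertex $w$ is adjacent to $u$, whence $d_G(w) = n-1$ as well and $G = K_n$ (for $n = 2$ this is just $G = K_2$). Conversely $K_n$ and $\overline{K_n}$ attain equality, which gives the stated characterization.

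The inequalities themselves are routine; the step needing the most care is making the equality characterizations airtight — in particular the degree-propagation argument in part (3) showing that one full-degree edge forces $G = K_n$, the counting observation in part (2) that a matching cannot have $\binom{n}{2}$ edges when $n \ge 3$, and the identification of the one-edge graphs in part (1) — together with checking the degenerate case $n = 2$ in each part.
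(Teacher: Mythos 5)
Your proof is correct, and for parts (1) and (3) it follows essentially the same route as the paper: part (1) rests on the subadditivity of $t\mapsto t^{\alpha/2}$ for $0<\alpha<1$ (which the paper simply asserts and you derive cleanly from $t^{\alpha/2}\ge t$ on $(0,1]$, with the termwise equality analysis pinning down $m\le 1$), and part (3) is the same per-edge bound $d_G(u)\le n-1$, where your degree-propagation argument (one edge with both endpoints of degree $n-1$ forces every other vertex to lie on such an edge, hence $G=K_n$) fills in a step the paper leaves implicit. The genuine divergence is in part (2): the paper passes to the subgraph $G_1$ obtained by deleting isolated vertices, bounds $m_1\le n_1\Delta_1/2$ via the handshake lemma, and then estimates $2^{-1+\alpha/2}n_1\Delta_1\delta_1^\alpha\le 2^{-1+\alpha/2}n(n-1)$ using $\delta_1\ge 1$; you instead bound each edge term directly by $2^{\alpha/2}$ (from $d_G(u)^2+d_G(v)^2\ge 2$) and use $m\le\binom{n}{2}$. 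Both give the same constant, but your version is shorter and, more importantly, your equality analysis is sharper: equality forces every edge to be a component $K_2$ \emph{and} $m=\binom{n}{2}$, which a counting argument rules out for $n\ge 3$, whereas the paper's stated equality condition (``$G$ regular and $\Delta_1\delta_1^\alpha=n-1$'') is harder to parse and not obviously equivalent to $G=K_2$. The only cost of your approach is that it does not record the intermediate bound $2^{\alpha/2}m\,\delta^{\alpha}$-type information that the paper's chain of inequalities implicitly contains, but that is not needed for the theorem as stated.
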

\begin{proof} Let $0<\alpha<1$. Then $$SO_\alpha(G)\geq \left[\displaystyle\sum_{uv\in E(G)} [{d_G(u)}^2+{d_G(v)}^2]\right]^{\alpha/2}=F(G)^{\alpha/2}$$ where the equality holds if and only if $|E(G)|\leq1$, i.e., $G=K_2\cup \overline{K_{n-2}}$ or $G=\overline{K_n}.$ This proves (1). 

Let $\alpha<0$ and $G\neq \overline{K_n}$. Let $G_1$ be the graph obtained from $G$ by deleting the possible isolated vertices of $G$. Let $n_1=|V(G_1)|$ and let $\Delta_1$ and $\delta_1$ denote the maximum and minimum degrees of $G_1$, respectively. Notice that by Handshake lemma, we have $m_1\leq \dfrac{n_1\Delta_1}{2}$. Then
$$SO_\alpha(G)\leq \displaystyle\sum_{uv\in E(G_1)} (2\delta_1^2)^{\alpha/2}\leq \dfrac{n_1\Delta_1}{2} (2\delta_1^2)^{\alpha/2}=2^{-1+\alpha/2}n_1 \Delta_1 {\delta_1}^\alpha\leq 2^{-1+\alpha/2}n(n-1),$$
where the equality holds if and only if $G$ is regular and $\Delta_1 \delta_1^\alpha=n-1$, i.e., $G=K_2$. This proves (2). 

Let $\alpha>1$. Notice that $d_G(u)\leq n-1$ for any vertex $u$ of $G$. Thus 
$$SO_\alpha(G)=\displaystyle\sum_{uv\in E(G)} [{d_G(u)}^2+{d_G(v)}^2]^{\alpha/2}\leq m [2(n-1)^2]^{\alpha/2}=2^{\alpha/2}m(n-1)^\alpha $$ where the equality holds if and only if either $d_G(u)=d_G(v)=n-1$ for every edge $uv$ of $G$ or $E(G)=\emptyset$, i.e., $G=K_n$ or $G=\overline{K_n}$. This completes the proof.
\end{proof}
\subsection{Nordhaus-Gaddum-type result for the general Sombor index}
\begin{theorem} Let $G$ be a graph on $n\geq2$ vertices and $m$ edges.
\begin{enumerate}
\item If $\alpha>0$, then
$$SO_\alpha(G)+SO_\alpha(\overline{G})\leq 2^{-1+\alpha/2}n(n-1)^{\alpha+1}$$ where the equality holds if and only if $G=K_n$ or $G=\overline{K_n}$.
Further, if $\alpha\geq 1$, then
$$SO_\alpha(G)+SO_\alpha(\overline{G})\geq  \dfrac{n(n-1)^{1+\alpha}}{2^{1+\alpha}}$$ and
if $0<\alpha<1$, then
$$SO_\alpha(G)+SO_\alpha(\overline{G}) \geq  \dfrac{n^{\alpha/2}(n-1)^{(3\alpha)/2}}{2^{(3\alpha)/2}}.$$
\item If $\alpha<0$, then
$$2^{-1+\alpha/2}n(n-1)^{\alpha+1}\leq SO_\alpha(G)+SO_\alpha(\overline{G}) < 2^{\alpha/2} n(n-1)$$ where the left equality holds if and only if $G=K_n$ or $G=\overline{K_n}$.
\end{enumerate}
\end{theorem}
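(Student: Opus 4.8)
The plan is to separate the four inequalities into two families: the extremal ones, proved by bounding a single edge-contribution and summing over all $\binom{n}{2}=m+\overline m$ edges of $K_n$; and the Jensen-type lower bounds, proved by a convexity inequality applied to the whole sum followed by a one-variable minimisation over an admissible degree $x\in[0,n-1]$.

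For the extremal bounds I would use only that every vertex $w$ satisfies $d_G(w)\le n-1$ and $d_{\overline G}(w)\le n-1$, and that both endpoints of any edge have degree at least $1$. If $\alpha>0$, then $t\mapsto t^{\alpha/2}$ is increasing, so each summand of $SO_\alpha(G)$ and of $SO_\alpha(\overline G)$ is at most $[2(n-1)^2]^{\alpha/2}=2^{\alpha/2}(n-1)^\alpha$; summing over the $m$ edges of $G$ and the $\overline m$ edges of $\overline G$ gives
$$SO_\alpha(G)+SO_\alpha(\overline G)\le(m+\overline m)\,2^{\alpha/2}(n-1)^\alpha=\binom{n}{2}\,2^{\alpha/2}(n-1)^\alpha=2^{-1+\alpha/2}n(n-1)^{\alpha+1},$$
which is the first assertion of (1). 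If $\alpha<0$, that map is decreasing, so reversing the single-edge inequality (using again $d\le n-1$) gives the left bound of (2), whereas the lower degree bound $d\ge1$ on edge-endpoints gives each summand $\le[1^2+1^2]^{\alpha/2}=2^{\alpha/2}$, hence $SO_\alpha(G)+SO_\alpha(\overline G)\le\binom{n}{2}\,2^{\alpha/2}=2^{-1+\alpha/2}n(n-1)<2^{\alpha/2}n(n-1)$, the right bound of (2). For the equality cases (the upper bound of (1) and the left bound of (2)): equality forces $d_G(u)=d_G(v)=n-1$ for every $uv\in E(G)$ and $d_{\overline G}(u)=d_{\overline G}(v)=n-1$ for every $uv\in E(\overline G)$. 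If both $G$ and $\overline G$ had an edge, an endpoint $u$ of a $G$-edge would have $d_G(u)=n-1$, hence $d_{\overline G}(u)=0$, contradicting that an endpoint of a $\overline G$-edge is $\overline G$-adjacent to all other vertices; so $E(G)=\emptyset$ or $E(\overline G)=\emptyset$, i.e.\ $G=\overline{K_n}$ or $G=K_n$, and both attain the bound since $SO_\alpha(K_n)=2^{-1+\alpha/2}n(n-1)^{\alpha+1}$ and $SO_\alpha(\overline{K_n})=0$.

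For the lower bound in (1) with $\alpha\ge1$, I would first reduce $SO_\alpha$ to a vertex functional. From $d_G(u)^2+d_G(v)^2\ge\max\{d_G(u),d_G(v)\}^2$ and $\alpha\ge0$ we get, on any edge $uv$,
$$[d_G(u)^2+d_G(v)^2]^{\alpha/2}\ \ge\ \max\{d_G(u),d_G(v)\}^\alpha\ \ge\ \tfrac12\bigl(d_G(u)^\alpha+d_G(v)^\alpha\bigr),$$
and the handshake-type identity $\sum_{uv\in E(G)}\bigl(d_G(u)^\alpha+d_G(v)^\alpha\bigr)=\sum_{w\in V(G)}d_G(w)^{\alpha+1}$ then yields $SO_\alpha(G)\ge\tfrac12\sum_w d_G(w)^{\alpha+1}$, and similarly for $\overline G$. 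Adding and substituting $d_{\overline G}(w)=n-1-d_G(w)$,
$$SO_\alpha(G)+SO_\alpha(\overline G)\ \ge\ \tfrac12\sum_{w\in V(G)}\Bigl(d_G(w)^{\alpha+1}+\bigl(n-1-d_G(w)\bigr)^{\alpha+1}\Bigr).$$
Since $x\mapsto x^{\alpha+1}$ is strictly convex on $(0,\infty)$ for $\alpha>0$, Corollary \ref{corollary1} with $k=2$ gives $x^{\alpha+1}+(n-1-x)^{\alpha+1}\ge 2\bigl(\tfrac{n-1}{2}\bigr)^{\alpha+1}=\tfrac{(n-1)^{\alpha+1}}{2^\alpha}$ for each admissible $x$, and summing over the $n$ vertices gives $SO_\alpha(G)+SO_\alpha(\overline G)\ge\tfrac{n(n-1)^{1+\alpha}}{2^{1+\alpha}}$. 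The same chain is valid for $0<\alpha<1$ and already implies the stated bound there (since $n(n-1)\ge2$); alternatively, for $0<\alpha<1$ one uses that $t\mapsto t^{\alpha/2}$ is subadditive (its negative is strictly convex and vanishes at $0$), so $SO_\alpha(G)+SO_\alpha(\overline G)\ge\bigl(F(G)+F(\overline G)\bigr)^{\alpha/2}$, and minimising $x^3+(n-1-x)^3$ vertexwise gives $F(G)+F(\overline G)\ge\tfrac{n(n-1)^3}{4}$, which yields the stated bound.

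The extremal bounds are routine; the step that requires care is the lower bound for $\alpha\ge1$, where one must produce the intermediate estimate $(a^2+b^2)^{\alpha/2}\ge\tfrac12(a^\alpha+b^\alpha)$ so that — after the handshake identity — the edge sum collapses to $\tfrac12\sum_w\bigl(d_G(w)^{\alpha+1}+d_{\overline G}(w)^{\alpha+1}\bigr)$ and the problem becomes the single-variable convex minimisation of $x^{\alpha+1}+(n-1-x)^{\alpha+1}$ on $[0,n-1]$. The only other subtlety is the short degree argument, used for the equality statements, that $G$ and $\overline G$ cannot both contain an edge when equality holds.
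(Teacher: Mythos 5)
Your proof is correct, and for the two lower bounds in part (1) — the only genuinely delicate part of the theorem — you take a different route from the paper. The paper gets these bounds by first establishing $F(G)+F(\overline{G})\geq \frac{n(n-1)^3}{8}$ via the known estimates $F(G)\geq \frac{M_1(G)^2}{2m}$ and $M_1(G)\geq \frac{4m^2}{n}$, and then transferring this to $SO_\alpha$ by Jensen's inequality over the combined edge set of $K_n$ (for $\alpha>1$) or by superadditivity of $x\mapsto x^{\alpha/2}$ (for $0<\alpha<1$). You instead collapse the edge sum to a vertex sum via $(a^2+b^2)^{\alpha/2}\geq \max\{a,b\}^\alpha\geq\tfrac12(a^\alpha+b^\alpha)$ and the handshake identity, reduce to minimising $x^{\alpha+1}+(n-1-x)^{\alpha+1}$ pointwise by convexity, and obtain $\frac{n(n-1)^{1+\alpha}}{2^{1+\alpha}}$ uniformly for all $\alpha>0$; since $\bigl(\tfrac{n(n-1)}{2}\bigr)^{1-\alpha/2}\geq1$, this is in fact \emph{stronger} than the stated bound for $0<\alpha<1$ (and your alternate route gives $F(G)+F(\overline{G})\geq\frac{n(n-1)^3}{4}$, again sharper than the paper's constant). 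Your argument is more self-contained, avoiding the cited inequalities on $F$ and $M_1$ entirely. Likewise, for the right-hand inequality of part (2) the paper invokes its Theorem \ref{theorem2}(2) applied to $G$ and $\overline{G}$ separately, whereas your direct per-edge bound $[d_G(u)^2+d_G(v)^2]^{\alpha/2}\leq 2^{\alpha/2}$ summed over the $\binom{n}{2}$ pairs yields the sharper $2^{-1+\alpha/2}n(n-1)$, from which strictness is immediate. The extremal bounds and the equality analysis (one of $G,\overline{G}$ must be edgeless) coincide with the paper's, with your equality discussion being somewhat more explicit about why both graphs cannot simultaneously contain an edge.
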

\begin{proof} Let $\alpha>0$. Notice that $d_G(u)\leq n-1$ for any vertex $u$ of $G$. Thus by definition, we have
\begin{align*}
SO_\alpha(G)+SO_\alpha(\overline{G}) =&\displaystyle\sum_{uv\in E(G)} [{d_G(u)}^2+{d_G(v)}^2]^{\alpha/2}+\displaystyle\sum_{uv\in E(\overline{G})} [{d_{\overline{G}}(u)}^2+{d_{\overline{G}}(v)}^2]^{\alpha/2} \\
\leq& m[2(n-1)^2]^{\alpha/2} + \overline{m}[2(n-1)^2]^{\alpha/2}\\
=&2^{\alpha/2}(m+\overline{m})(n-1)^{\alpha} = 2^{-1+\alpha/2}n(n-1)^{\alpha+1}
\end{align*}
where the equality holds if and only if either $d_G(u)=d_G(v)=n-1$ for every edge $uv$ of $G$ or $E(G)=\emptyset$, i.e., $G=K_n$ or $G=\overline{K_n}$. This proves the first part of (1). Similarly, if $\alpha<0$, then $SO_\alpha(G)+SO_\alpha(\overline{G})\geq 2^{-1+\alpha/2}n(n-1)^{\alpha+1}$, where the equality holds if and only if $G=K_n$ or $G=\overline{K_n}$. Further, by Theorem \ref{theorem2} (2), we have 
$$SO_\alpha(G)+SO_\alpha(\overline{G})<2^{-1+\alpha/2} n(n-1) +2^{-1+\alpha/2} n(n-1)=2^{\alpha/2} n(n-1).$$ This proves (2).\\
Notice that $m+\overline{m}=\dfrac{n(n-1)}{2}$, thus $F(G)\geq \dfrac{ M_1(G)^2}{2m}\geq \dfrac{ M_1(G)^2}{n(n-1)}$ and $F(\overline{G})\geq \dfrac{ M_1(\overline{G})^2}{2\overline{m}}\geq \dfrac{ M_1(G)^2}{n(n-1)}$. So, for $\alpha=2$, 
\begin{align}
SO_\alpha(G)+SO_\alpha(\overline{G})=&F(G)+F(\overline{G})\notag \\
\geq &  \dfrac{1}{n(n-1)}\left[M_1(G)^2+M_1(\overline{G})^2\right]\notag \\
\geq & \dfrac{1}{n(n-1)}\dfrac{\left[M_1(G)+M_1(\overline{G})\right]^2}{2}\notag \\
\geq &\dfrac{1}{n(n-1)}\dfrac{1}{2}\dfrac{n^2(n-1)^4}{4}=\dfrac{n(n-1)^3}{2^3} \label{eqn:eq1}
\end{align}
Now if $\alpha>1$, then $x^{\alpha/2}$ is strictly convex. Thus
\begin{align}
&SO_\alpha(G)+SO_\alpha(\overline{G})\notag\\
&\geq (m+\overline{m})\left[\dfrac{\displaystyle\sum_{uv\in E(G)} [{d_G(u)}^2+{d_G(v)}^2]+\displaystyle\sum_{uv\in E(\overline{G})} [{d_{\overline{G}}(u)}^2+{d_{\overline{G}}(v)}^2]}{m+\overline{m}}\right]^{\alpha/2}\notag \\
= & (m+\overline{m})^{1-\alpha/2}[F(G)+F(\overline{G})]^{\alpha/2} \label{eqn:eq2}
\end{align}
From (\ref{eqn:eq1}) and (\ref{eqn:eq2}), for $\alpha>1$ we have 
$$SO_\alpha(G)+SO_\alpha(\overline{G})\geq \dfrac{n(n-1)^{1+\alpha}}{2^{1+\alpha}}.$$
Lastly, if $0<\alpha<1$, then 
\begin{align}
SO_\alpha(G)+SO_\alpha(\overline{G})\geq &\left[\displaystyle\sum_{uv\in E(G)} [{d_G(u)}^2+{d_G(v)}^2]+\displaystyle\sum_{uv\in E(\overline{G})} [{d_{\overline{G}}(u)}^2+{d_{\overline{G}}(v)}^2]\right]^{\alpha/2}\notag \\
= & [F(G)+F(\overline{G})]^{\alpha/2} \label{eqn:eq3}
\end{align}
From (\ref{eqn:eq1}) and (\ref{eqn:eq3}), for $0<\alpha<1$ we have
$$SO_\alpha(G)+SO_\alpha(\overline{G})\geq \dfrac{n^{\alpha/2}(n-1)^{(3\alpha)/2}}{2^{(3\alpha)/2}}.$$
This completes the proof.
\end{proof}
\section{Comparison of general Sombor index with other generalised indices} 
In this section, we present the relations between general Sombor index and other generalised indices: general Randi\'c index and general sum-connectivity index.
\subsection{General Sombor index and general Randi\'c index}
We now present a relation between general Sombor index and Randi\'c index.
\begin{theorem} Let $G$ be a graph on $n$ vertices and $m$ edges. Then $$ \dfrac{2^{\alpha/2}}{\Delta^\alpha}R_\alpha(G)\leq SO_\alpha(G) \leq \dfrac{2^{\alpha/2}}{\delta^\alpha}R_\alpha(G)$$ where the equality holds if and only if $G$ is regular.
\end{theorem}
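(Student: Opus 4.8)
The plan is to reduce the double inequality to a uniform, edge-by-edge comparison of the summands of $SO_\alpha$ and $R_\alpha$, and then sum. Fix an edge $uv$ and write $a=d_G(u)$, $b=d_G(v)$, so that $\delta\le a,b\le\Delta$. The key observation is the algebraic identity
$$\frac{(a^2+b^2)^{\alpha/2}}{(ab)^\alpha}=\left(\frac{a^2+b^2}{a^2b^2}\right)^{\alpha/2}=\left(\frac{1}{a^2}+\frac{1}{b^2}\right)^{\alpha/2}.$$
Since $\delta\le a,b\le\Delta$ forces $\frac{1}{\Delta^2}\le\frac1{a^2},\frac1{b^2}\le\frac1{\delta^2}$, we get the two-sided estimate $\frac{2}{\Delta^2}\le\frac1{a^2}+\frac1{b^2}\le\frac{2}{\delta^2}$. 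Raising to the power $\alpha/2$ (the map $x\mapsto x^{\alpha/2}$ is increasing for $\alpha>0$, the identity for $\alpha=0$, and decreasing for $\alpha<0$, in which last case the roles of $\Delta$ and $\delta$ simply interchange) yields, for $\alpha>0$,
$$\frac{2^{\alpha/2}}{\Delta^\alpha}\le\frac{(a^2+b^2)^{\alpha/2}}{(ab)^\alpha}\le\frac{2^{\alpha/2}}{\delta^\alpha}.$$

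Next I would multiply through by $[d_G(u)d_G(v)]^\alpha>0$ and sum over all $uv\in E(G)$; by linearity this gives at once $\frac{2^{\alpha/2}}{\Delta^\alpha}R_\alpha(G)\le SO_\alpha(G)\le\frac{2^{\alpha/2}}{\delta^\alpha}R_\alpha(G)$. Equivalently, one may observe that $SO_\alpha(G)/R_\alpha(G)$ is a convex combination of the per-edge ratios $(a^2+b^2)^{\alpha/2}/(ab)^\alpha$ with strictly positive weights $[d_G(u)d_G(v)]^\alpha/R_\alpha(G)$, hence it lies between the same two constants; this phrasing makes the equality analysis transparent.

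For the equality cases, a convex combination with positive weights attains an endpoint only if every term does. Thus equality on the left (for $\alpha>0$) forces $\frac1{a^2}+\frac1{b^2}=\frac2{\Delta^2}$ for every edge, and since each summand is at least $1/\Delta^2$ this means $d_G(u)=d_G(v)=\Delta$ on every edge; under the standing hypothesis that $G$ has no isolated vertices (implicit, since $1/\delta^\alpha$ must be meaningful), every vertex is incident with an edge, so every vertex has degree $\Delta$ and $G$ is regular. The identical argument with $\delta$ in place of $\Delta$ handles equality on the right, and conversely if $G$ is $r$-regular then $SO_\alpha(G)=2^{\alpha/2}mr^\alpha$ and $R_\alpha(G)=mr^{2\alpha}$, so both bounds are met. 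There is no serious obstacle here; the only point requiring care is the bookkeeping of the sign of $\alpha$ when inverting $x\mapsto x^{\alpha/2}$, together with stating the equality discussion consistently with the assumption $\delta\ge 1$. The core inequality is a one-line consequence of $\delta\le d_G(u),d_G(v)\le\Delta$.
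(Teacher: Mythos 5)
Your proof is correct and follows essentially the same route as the paper's: factor each summand as $[d_G(u)d_G(v)]^\alpha\bigl[\tfrac{1}{d_G(u)^2}+\tfrac{1}{d_G(v)^2}\bigr]^{\alpha/2}$, bound the bracket between $2/\Delta^2$ and $2/\delta^2$, and sum over edges. Your explicit remark that for $\alpha<0$ the map $x\mapsto x^{\alpha/2}$ is decreasing (so $\Delta$ and $\delta$ swap roles in the stated bounds) is a point of care the paper's proof silently omits.
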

\begin{proof} Notice that 
\begin{align*}
SO_\alpha(G)=&\displaystyle\sum_{uv\in E(G)} [{d_G(u)}^2+{d_G(v)}^2]^{\alpha/2}\\
=&\displaystyle\sum_{uv\in E(G)} [d_G(u)d_G(v)]^\alpha\left[\dfrac{1}{d_G(u)^2}+\dfrac{1}{d_G(v)^2}\right]^{\alpha/2}\\
\geq&\displaystyle\sum_{uv\in E(G)} [d_G(u)d_G(v)]^\alpha\left[\dfrac{1}{\Delta^2}+\dfrac{1}{\Delta^2}\right]^{\alpha/2}\\
=&\dfrac{2^{\alpha/2}}{\Delta^\alpha}R_\alpha(G)
\end{align*}
Similarly, we obtain the upper bound. Moreover, it is easy to see that the equality holds if and only if $G$ is regular. This completes the proof.
\end{proof}
As a consequence, letting $\alpha=1$ we get a relation for Sombor index in terms of second Zagreb index as reported in \cite{17}.
\begin{corollary} Let $G$ be a graph on $n$ vertices and $m$ edges. Then $$ \dfrac{\sqrt{2}}{\Delta}M_2(G)\leq SO(G) \leq \dfrac{\sqrt{2}}{\delta}M_2(G)$$ where the equality holds if and only if $G$ is regular.
\end{corollary}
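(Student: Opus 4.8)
The plan is to obtain this corollary as the direct specialization of the preceding Theorem to $\alpha = 1$, so essentially no new argument is needed. First I would record the two identifications that make the substitution work: from the definition of the general Sombor index, $SO_1(G) = \sum_{uv\in E(G)} [{d_G(u)}^2+{d_G(v)}^2]^{1/2} = SO(G)$; and from the definition of the general Randi\'c index, $R_1(G) = \sum_{uv\in E(G)} [d_G(u)d_G(v)]^1 = \sum_{uv\in E(G)} d_G(u)d_G(v) = M_2(G)$.

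Next I would substitute $\alpha = 1$ into the Theorem's chain $\frac{2^{\alpha/2}}{\Delta^\alpha}R_\alpha(G)\leq SO_\alpha(G)\leq \frac{2^{\alpha/2}}{\delta^\alpha}R_\alpha(G)$, which upon using the identifications above becomes exactly $\frac{\sqrt2}{\Delta}M_2(G)\leq SO(G)\leq \frac{\sqrt2}{\delta}M_2(G)$. Here I should note that the Theorem's proof places no restriction on the exponent $\alpha$ for the defining step — it rests only on the elementary bound $\frac{1}{\Delta^2}+\frac{1}{\Delta^2}\leq \frac{1}{d_G(u)^2}+\frac{1}{d_G(v)^2}\leq \frac{1}{\delta^2}+\frac{1}{\delta^2}$ valid for every edge $uv$ — so $\alpha = 1$ is an admissible value and the inequalities are inherited verbatim.

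Finally I would transfer the equality characterization: equality in the lower (resp. upper) bound forces $d_G(u)=d_G(v)=\Delta$ (resp. $=\delta$) for every edge $uv$, whence $\Delta=\delta$ and $G$ is regular; conversely, if $G$ is $r$-regular then both outer expressions equal $\frac{\sqrt2}{r}\cdot r^2 m = \sqrt2\, r m = SO(G)$, so equality holds throughout. I do not anticipate any genuine obstacle — the only thing warranting a sentence of care is this equality clause, and it is immediate once one unpacks when the edgewise estimates are tight. One may also remark that this recovers the relation between $SO(G)$ and $M_2(G)$ reported in \cite{17}.
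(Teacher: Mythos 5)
Your proposal is correct and matches the paper exactly: the corollary is obtained by setting $\alpha=1$ in the preceding theorem and using the identifications $SO_1(G)=SO(G)$ and $R_1(G)=M_2(G)$, with the equality case inherited from the regularity characterization there. The extra care you take in verifying the equality clause for an $r$-regular graph is sound and consistent with the paper's (unstated) reasoning.
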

\subsection{General Sombor index and general sum-connectivity index} 
Lastly, we present a relation between general Sombor index and general sum-connectivity index.
\begin{theorem} Let $G$ be a graph on $n$ vertices and $m$ edges. Then $$ \dfrac{\chi_\alpha(G)}{2^{\alpha/2}}\leq SO_\alpha(G) \leq  \sqrt{m \Delta^\alpha  \chi_\alpha(G)}$$ where the left equality holds if and only if $G$ is regular.
\end{theorem}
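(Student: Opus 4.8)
The plan is to prove the two inequalities separately, reducing each to a single one-line estimate on the degrees at an edge together with one classical inequality --- the elementary bound $(a+b)^{2}\le 2(a^{2}+b^{2})$ for the left inequality, and Radon's inequality (Lemma~\ref{lemma2}) for the right one. Throughout I use that every edge $uv$ satisfies $1\le d_G(u),d_G(v)\le\Delta$, and that $t\mapsto t^{\alpha}$ is increasing on $(0,\infty)$; the argument applies for $\alpha>0$ (for $\alpha\le0$ the left inequality can already fail, e.g.\ on $P_3$), and the case $m=0$ is trivial, so I assume $m\ge1$.

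\emph{Left inequality.} I would begin with $(a+b)^{2}\le 2(a^{2}+b^{2})$ for $a,b\ge0$ (equivalently $2ab\le a^{2}+b^{2}$), with equality iff $a=b$. Putting $a=d_G(u)$, $b=d_G(v)$ and raising to the power $\alpha/2>0$ gives, for each edge $uv$,
\[
[d_G(u)+d_G(v)]^{\alpha}\le 2^{\alpha/2}\bigl[d_G(u)^{2}+d_G(v)^{2}\bigr]^{\alpha/2}.
\]
Summing over $E(G)$ and dividing by $2^{\alpha/2}$ yields $2^{-\alpha/2}\chi_\alpha(G)\le SO_\alpha(G)$. Equality forces $d_G(u)=d_G(v)$ on every edge, hence (for connected $G$) that $G$ is regular; conversely a $k$-regular graph attains it, giving the stated characterisation.

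\emph{Right inequality.} The key estimate is $d_G(u)^{2}+d_G(v)^{2}\le\Delta\bigl[d_G(u)+d_G(v)\bigr]$, immediate from $d^{2}\le\Delta d$ for $1\le d\le\Delta$. I would then apply Radon's inequality (Lemma~\ref{lemma2}) with $p=1$ to $a_{uv}=\bigl[d_G(u)^{2}+d_G(v)^{2}\bigr]^{\alpha/2}$ and $b_{uv}=\bigl[d_G(u)+d_G(v)\bigr]^{\alpha}$, for which $\sum_{uv}a_{uv}=SO_\alpha(G)$ and $\sum_{uv}b_{uv}=\chi_\alpha(G)$:
\[
\frac{SO_\alpha(G)^{2}}{\chi_\alpha(G)}\le\sum_{uv\in E(G)}\frac{a_{uv}^{2}}{b_{uv}}=\sum_{uv\in E(G)}\left(\frac{d_G(u)^{2}+d_G(v)^{2}}{d_G(u)+d_G(v)}\right)^{\!\alpha}\le\sum_{uv\in E(G)}\Delta^{\alpha}=m\Delta^{\alpha},
\]
using the key estimate and $\alpha>0$ in the last inequality. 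Rearranging gives $SO_\alpha(G)\le\sqrt{m\Delta^{\alpha}\chi_\alpha(G)}$.

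I do not expect a real obstacle here. The left bound is a termwise comparison once $(a+b)^2\le2(a^2+b^2)$ is recorded; for the right bound the only point requiring a moment's thought is choosing $a_{uv},b_{uv}$ so that $\sum a_{uv}=SO_\alpha(G)$, $\sum b_{uv}=\chi_\alpha(G)$ and the residual quotients $a_{uv}^{2}/b_{uv}$ are each bounded by $\Delta^{\alpha}$ --- after that, Radon's inequality does the work. (One can moreover trace equality through the upper-bound argument: it becomes an equality exactly for regular $G$, via equality in both Radon's inequality and $d^{2}\le\Delta d$.)
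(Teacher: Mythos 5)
Your proof is correct and follows essentially the same route as the paper's: the lower bound via the termwise inequality $d_G(u)^2+d_G(v)^2\ge\tfrac12[d_G(u)+d_G(v)]^2$, and the upper bound via Radon's inequality with exactly the same choice of $a_{uv}$, $b_{uv}$ and the estimate $\frac{d_G(u)^2+d_G(v)^2}{d_G(u)+d_G(v)}\le\Delta$. Your explicit remark that both directions require $\alpha>0$ (since raising to a negative power reverses the termwise comparisons) is a worthwhile precision that the paper's statement omits.
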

\begin{proof} Let $V(G)=\{v_1,v_2,\dots,v_n\}$. Let, for simplicity, $d_i=d_G(v_i) $ for a vertex $v_i$ of $G$. Letting $a_k\to [{d_i}^2+{d_j}^2]^{\alpha/2}$ and $b_k \to [d_i+d_j]^\alpha$ in Lemma \ref{lemma2} for $p=1$ and the sums running over the edges in $G$, we have
\begin{align}
\displaystyle\sum_{v_iv_j\in E(G)}\left[\dfrac{{d_i}^2+{d_j}^2}{d_i+d_j}\right]^\alpha \geq \dfrac{\left[\displaystyle\sum_{v_iv_j\in E(G)}[{d_i}^2+{d_j}^2]^{\alpha/2}\right]^2} {\displaystyle\sum_{v_iv_j\in E(G)}[d_i+d_j]^\alpha} \label{eqn:eq4}
\end{align}
Notice that $\dfrac{{d_i}^2+{d_j}^2}{d_i+d_j}\leq \Delta$ for every edge $v_iv_j$ of $G$. Thus (\ref{eqn:eq4}) becomes
 $$ \dfrac{{SO_\alpha(G)}^2}{\chi_\alpha(G)}\leq m\Delta^\alpha \implies SO_\alpha(G) \leq \sqrt{m \Delta^\alpha  \chi_\alpha(G)}.$$
 For the lower bound, notice that for any edge $v_iv_j$ of $G$, we have ${d_i}^2+{d_j}^2\geq \dfrac{(d_i+d_j)^2}{2}$. Thus
 $$SO_\alpha(G)=\displaystyle\sum_{v_iv_j\in E(G)} [{d_i}^2+{d_j}^2]^{\alpha/2}\geq \displaystyle\sum_{v_iv_j\in E(G)} \dfrac{[d_i+d_j]^{\alpha}}{2^{\alpha/2}}=\dfrac{\chi_\alpha(G)}{2^{\alpha/2}}.$$
\end{proof}
As a consequence, letting $\alpha=1$ we get the lower bound for Sombor index as reported in \cite{17} and the upper bound for Sombor index as found by the present authors in \cite3.
\begin{corollary} Let $G$ be a graph on $n$ vertices and $m$ edges. Then $$ \dfrac{M_1(G)}{\sqrt{2}}\leq SO(G) \leq  \sqrt{m \Delta  M_1(G)}$$ where the left equality holds if and only if $G$ is regular.
\end{corollary}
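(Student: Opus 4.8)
The plan is to obtain this corollary as the special case $\alpha=1$ of the preceding theorem relating the general Sombor index to the general sum-connectivity index, so that no genuinely new argument is needed; the only point worth spelling out is the identification of $\chi_1(G)$ with the first Zagreb index $M_1(G)$.

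First I would set $\alpha=1$ throughout that theorem. Then $SO_\alpha(G)$ becomes $SO_1(G)=\sum_{uv\in E(G)}\sqrt{d_G(u)^2+d_G(v)^2}=SO(G)$, the factor $2^{\alpha/2}$ on the left becomes $2^{1/2}=\sqrt2$, and the factor $\Delta^\alpha$ on the right becomes $\Delta$. Next I would recall that, by definition, $\chi_1(G)=\sum_{uv\in E(G)}[d_G(u)+d_G(v)]$, and that this is exactly the alternative form of the first Zagreb index recorded in the Introduction, namely $M_1(G)=\sum_{uv\in E(G)}[d_G(u)+d_G(v)]$; hence $\chi_1(G)=M_1(G)$. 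Substituting these identifications into the two inequalities of the theorem yields $$\frac{M_1(G)}{\sqrt2}\le SO(G)\le\sqrt{m\,\Delta\,M_1(G)}.$$

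Finally, I would note that the equality analysis is inherited directly: the left-hand estimate in the theorem comes edge by edge from $d_G(u)^2+d_G(v)^2\ge (d_G(u)+d_G(v))^2/2$, with equality precisely when $d_G(u)=d_G(v)$, so equality over all edges forces regularity; specializing $\alpha=1$ does not change this characterization, and therefore the left equality in the corollary holds if and only if $G$ is regular. There is essentially no obstacle here, since the whole content is the substitution $\alpha=1$ together with the observation $\chi_1=M_1$; if one preferred a self-contained proof, one could simply repeat the two one-line estimates from the proof of the theorem with $\alpha=1$ --- the lower bound from $d_G(u)^2+d_G(v)^2\ge (d_G(u)+d_G(v))^2/2$, and the upper bound from Radon's inequality (Lemma \ref{lemma2}) with $p=1$ together with the observation $(d_G(u)^2+d_G(v)^2)/(d_G(u)+d_G(v))\le\Delta$ for every edge of $G$.
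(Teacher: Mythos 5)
Your proposal is correct and matches the paper's approach exactly: the paper obtains this corollary by setting $\alpha=1$ in the preceding theorem and using the identity $\chi_1(G)=\sum_{uv\in E(G)}[d_G(u)+d_G(v)]=M_1(G)$, with the equality characterization inherited from the theorem. Nothing further is needed.
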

\section*{Acknowledgments}
The second author is supported by the MATRICS project funded by DST-SERB, Government of India under Grant no. MTR/2017/000403 dated 06/06/2018.


\begin{thebibliography}{0}
\bibitem{1} B. Borovi\'canin, K.C. Das, B. Furtula and I. Gutman, Bounds for Zagreb Indices, {\it MATCH Commun. Math. Comput. Chem.} {\bf 78} (2017), 17-100.
\bibitem{2} Z. Chea, Z. Chen, Lower and Upper Bounds of the Forgotten Topological Index,  {\it MATCH Commun. Math. Comput. Chem.} {\bf 76} (2016), 635-648.
\bibitem{3} C. Phanjoubam and S. Mn. Mawiong, On Sombor index and some topological indices, submitted for publication.
\bibitem{4} Z. Cvetkovski, {\it Inequalities}, Springer-Verlag Berlin Heidelberg (2012).
\bibitem{5} R. Cruz, I. Gutman and J. Rada, Sombor index of chemical graphs, {\it Appl. Math. Comput.} {\bf399} (2021), 126018.
\bibitem{6} K.C. Das, A.S. \c{C}evik,  I.N. Cangul and  Y. Shang, On Sombor Index, {\it Symmetry} {\bf 13} (2021), 140.
\bibitem{7} T. Do\v{s}li\'c, Vertex-Weighted Wiener Polynomials for Composite Graphs, {\it Ars Math. Contemp.} {\bf1} (2008), 66-80.
\bibitem{8} B. Furtula and I. Gutman,  A forgotten topological index,  {\it J. Math. Chem.} {\bf53} (2015) 1184-1190.
\bibitem{9}  I. Gutman, Geometric approach to degree-based topological indices: Sombor indices, {\it MATCH Commun. Math. Comput. Chem.} {\bf 86} (2021), 11-16.
\bibitem{10} I. Gutman and J. To\v{s}ovi\'c, Testing the quality of molecular structure descriptors. Vertex-degree based topological indices. {\it J. Serb. Chem. Soc.} {\bf78} (6) 805-810.
\bibitem{11} D.J. Klein, T. Do\v{s}li\'c and D. Bonchev, Vertex-weightings for distance moments and thorny graphs, {\it Discrete Appl. Math.} {\bf155} (2007), 2294-2302.
\bibitem{12} X. Li and I. Gutman, {\it Mathematical Aspects of Randi\'c-Type Molecular Structure Descriptors}, University
of Kragujevac, Kragujevac, (2006).
\bibitem{13} X. Li and J. Zheng, A unified approach to the extremal trees for different indices, {\it MATCH Commun. Math. Comput. Chem.} {\bf54} (2005) 195-208.
\bibitem{14} S. Nikoli\'c, G. Kova\v{c}evi\'c, A. Mili\v{c}evi\'c and N. Trinajsti\'c, The Zagreb Indices 30 Years After, {\it Croat. Chem. Acta} {\bf76} (2003), 113-124.
\bibitem{15} I. Red\v{z}epovi\'c, Chemical applicability of Sombor indices, {\it J. Serb. Chem. Soc.} {\bf86} (5) (2021) 445-457.
\bibitem{16} R. Todeschini and V. Consonni, {\it Handbook of Molecular Descriptors}, Wiley-VCH, Weinheim (2000).
\bibitem{17} Z. Wang, Y. Mao, Y. Li and B. Furtula, On relations between Sombor and other degree-based indices, {\it J. Appl. Math. Comput,} DOI: 10.1007/s12190-021- 01516-x.
\bibitem{18} H. Wiener, Structural determination of paraffin boiling points, {\it J. Am. Chem. Soc.} {\bf69} (1947) 17-20. 
\bibitem{19} B. Zhou and N. Trinajsti\'c, On a novel connectivity index. {\it J. Math. Chem.} {\bf46} (2009) 1252-1270.
\bibitem{20} B. Zhou and N. Trinajsti\'c, On general sum-connectivity index. {\it J. Math. Chem.} {\bf47} (2010) 210-218.
\end{thebibliography}
\end{document}